\definecolor{darkgreen}{rgb}{0,0.5,0}
\newtheorem{theorem}{Theorem}
\newtheorem{proposition}[theorem]{Proposition}
\newtheorem*{question}{Question}
\newtheorem{corollary}[theorem]{Corollary}
\newtheorem{lemma}[theorem]{Lemma}
\theoremstyle{definition}
\newtheorem{definition}[theorem]{Definition}
\theoremstyle{remark}
\newtheorem{remark}[theorem]{Remark}
\DeclareMathOperator{\Fix}{Fix}
\DeclareMathOperator{\Aut}{Aut}
\DeclareMathOperator{\GL}{GL}
\numberwithin{equation}{section}
\title{Rigid Manifolds of general type with non-contractible universal cover}
\author{Davide Frapporti, Christian Gleissner}
\address{Davide Frapporti,
Universit\`a  degli Studi di Genova, Dipartimento di Matematica - DIMA;\newline
Via Dodecaneso 35, I-16146 Genova, Italy.}
\email{davide.frapporti@edu.unige.it}
\address{ Christian Gleissner ,
 University of Bayreuth, Lehrstuhl Mathematik VIII; \newline
Universit\"atsstra\ss e 30, D-95447 Bayreuth, Germany.	
}
\email{Christian.Gleissner@uni-bayreuth.de}
\date{\today}
\keywords{Rigid complex manifolds, deformation theory, fundamental group, classifying space} 
\subjclass[2010]{32G05,  14J10, 14L30, 14J40, 32Q30, 14B05}
\thanks{The authors thank I.~Bauer, F.~Catanese, S.~Coughlan and F.~Fallucca for their interest and fruitful conversations.\\
The first author is member of  G.N.S.A.G.A. of I.N.d.A.M. and acknowledges support of 
 the ERC Advanced grant n. 340258-TADMICAMT}
\date{\today}
\begin{document}

\begin{abstract} 
For each $n\geq 3$ we give  examples of  infinitesimally  rigid projective manifolds of general type of dimension $n$ with  non-contractible universal cover. We provide examples with projective  and examples with  non-projective universal cover.
\end{abstract}

\maketitle

\section*{Introduction}

In \cite{rigidity} several notions  of \emph{rigidity} have been discussed, the relations among them have been studied and many questions and conjectures have been proposed. 
In particular the authors showed  
that a rigid compact complex surface has Kodaira dimension $- \infty$ or $2$, and observed that all known examples of rigid surfaces of general type  
are $K(\pi,1)$ spaces. Recall that a CW complex  with fundamental group $\pi$ is called  \emph{$K(\pi,1)$ space} if its universal cover is contractible, and that 
these spaces have the property that their homotopy type is uniquely determined by their fundamental group  (cf.~\cite[\S 1.B]{Hatcher}). 
This implies that the topological invariants, such as  homology and cohomology, are determined by $\pi$. 
In \cite{rigidity} the following natural question has been posed. 

\begin{question}
Do there exist infinitesimally rigid  surfaces of general type with non-contractible universal cover?
\end{question}

The aim of this paper is to give a positive answer for the analogous question in higher dimensions. More precisely, we construct for each $n\geq 3$  an  infinitesimally  rigid  manifold of general type  of dimension $n$ with  non-contractible universal cover. 
For surfaces the question remains open.
We  recall now  the notions of rigidity that are relevant for our purposes.
\begin{definition}\label{rigid} \

Let $X$ be a compact complex manifold of dimension $n$. 
\begin{enumerate}
\item A {\em deformation of $X$} is a  proper smooth holomorphic map of pairs $$f \colon (\mathfrak{X},X)  \rightarrow (\mathcal{B}, b_0),
$$ 
where $(\mathcal B,b_0)$ is a connected (possibly not reduced) germ of a complex space.

\item $X$ is said to be  {\em  rigid}  if for each deformation of $X$,
$
f \colon (\mathfrak{X},X)  \rightarrow (\mathcal B, b_0)
$ 
there is an open neighbourhood $U \subset \mathcal B$ of $b_0$ such that $X_t := f^{-1}(t) \simeq X$ for all $t \in U$.
\item  $X$ is said to be  {\em infinitesimally rigid} if 
$H^1(X, \Theta_X) = 0$,
where $\Theta_X$ is the sheaf of holomorphic vector fields on $X$.
\item 
$X$ is said to be (infinitesimally) \'etale rigid if all finite \'etale covers $f\colon Y \to X$ are (infinitesimally) rigid. 
\end{enumerate}
\end{definition}

\begin{remark}\label{kuranishi} 

i) By Kodaira-Spencer-Kuranishi theory every infinitesimally rigid manifold is rigid.
The converse does not hold in general as it was shown in \cite{BP18} and \cite{BBP20}  (cf.~also \cite{kodairamorrow}).

ii) Beauville surfaces are examples of rigid, but not \'etale rigid manifolds (see \cite{Cat00}).
\end{remark}

Both the examples constructed in \cite{BP18} and Beauville surfaces are product quotient varieties, i.e.~(resolutions of singularities of) finite  quotients of product of curves  with respect to a  holomorphic group action.
In recent years, product quotients turned out to be a very fruitful source of examples  of rigid complex manifolds with additional properties. 
Besides the examples above,  we mention  \cite{BG20},  where the authors construct the first examples of  rigid complex manifolds 
with Kodaira dimension 1 in arbitrary dimension $n\geq 3$, and \cite{BG21} where they constructed new rigid three- and four-folds  with Kodaira dimension 0. 
We refer to \cite{CF18,FGP18,FG19,GPR18,LP16,LP18} for other interesting examples of product quotient varieties.

The manifolds we construct are also product quotients.  More precisely,  inspired by the construction in \cite{BP18} in Section \ref{sec:example}
 we consider  for each $n \geq 3$ and $d\geq 4$, even and not divisible by 3 the $n$-fold product  $C^n$ of the  Fermat curve $C$ of degree $d$ together with a suitable action  of $\mathbb Z_d^2$.
The quotient $X_{n,d} := C^n/\mathbb Z_d^2$ is 
 a normal projective variety  with isolated cyclic quotient singularities of type $\frac{1}{2}(1, \ldots,1)$, Kodaira dimension $n$ and 
\[
H^1(X_{n,d}, \Theta_{X_{n,d}}) = H^1(C^n, \Theta_{C^n})^{\mathbb Z_d^2} = 0.
\]
Blowing up the singular points, we obtain a resolution $\widehat{X_{n,d}} \rightarrow {X_{n,d}}$ such that 
$H^1({X_{n,d}}, \Theta_{X_{n,d}}) = H^1(\widehat{X_{n,d}}, \Theta_{\widehat{{X_{n,d}}}})$. Therefore,  
${\widehat{{X_{n,d}}}}$ is an infinitesimally rigid projective manifold of general type.  

In Section \ref{sec:UC} we show that the 
universal cover $U_{n,d}$ of $\widehat{X_{n,d}}$ is non-contractible since it contains several $\mathbb P^{n-1}$ (see Propostion \ref{Prop:non-contr}). We then discuss the finiteness of the fundamental group 
$\pi_1({X_{n,d}}) = \pi_1(\widehat{{X_{n,d}}})$.
The crucial ingredient  here is 
Armstrong's description of the fundamental group of a quotient space \cite{armstrong} adapted to product quotients by  \cite{BCGP12}.
The finiteness of $\pi_1(\widehat{X}_n)$  is equivalent to  the finiteness of certain groups (Proposition \ref{prop:fin_crit}: Finiteness criterion). This allows us to prove the following.
\begin{theorem} For each $n\geq 3$,  $d\geq 4$, even and not divisible by 3   there exists an infinitesimally rigid projective $n$-dimensional manifold of general type $\widehat{X_{n,d}}$, whose universal cover $U_{n,d}$ is non-contractible.
Moreover, the universal cover $U_{n,d}$ is projective if and only if $d=4$.
\end{theorem}

The construction actually works also for $n=2$: the surface $\widehat{X_{2,4}}$ is not rigid, whereas the surface
 $\widehat{X_{2,d}}$  for $d\geq 8$ is rigid but not infinitesimally rigid (see \cite{BP18}), and its universal cover is non-contractible. 

\
 
\textbf{Notation.}
We work over the field of complex numbers, and we denote by 
$\mathbb Z_n$ the cyclic group of order $n$ and by $\zeta_n$ a primitive $n$-th root of unity. 
The rest of the notation is standard in complex algebraic geometry.

\section{The families}\label{sec:example}

Let $C_d:=\lbrace x_0^d+x_1^d+x_2^d=0\rbrace \subset \mathbb P^2$  be the  Fermat curve of degree $d$. Consider the group action 
\[
\phi_1\colon \mathbb Z_d^2 \to \Aut(C),  \qquad (a,b) \mapsto [(x_0:x_1:x_2)  \mapsto (\zeta_d^a x_0: \zeta_d^b x_1 : x_2)]\,.
\]
There are  $3d$  points on $C_d$  with non-trivial stabilizer. They form three orbits of length $d$. 
A representative  of each orbit and a generator of the corresponding stabilizer is given in the table below:

\begin{center}
\setlength{\tabcolsep}{1pt}

\begin{tabular}{c | c | c | c }

 point  & $ ~(0:1:\zeta_{2d})  ~ $  & $~ (1:0:\zeta_{2d}) ~ $ ~   & $ ~(1: \zeta_{2d} : 0) ~ $   \\
\hline
$~~$ generator $~~$  & $ (1,0) $  &  $  (0,1)  $ &  $ (1,1) $ 
\end{tabular}
\end{center}
Hence the quotient map 
\[f \colon C_d \to \mathbb P^1, \qquad (x_0:x_1:x_2) \mapsto (x_0^d:x_1^d)\]
 is branched in $(0:1)$, $(1:0)$ and $(1:-1)$, each  with branch index $d$.

\subsection{The singular quotients $X_{n,d}$} 

From now on we fix $d\geq 4$, even and not divisible by 3, and denote $C_d$ simply by $C$.
Let $A$ be the automorphism of $\mathbb Z_d^2$  given by the matrix
\[
\begin{pmatrix} 1 & -2 \\ 2 & -1 \end{pmatrix} \in \GL(2,\mathbb Z_d)\,,
\]
 and let $\phi_2 := \phi_1 \circ A^{-1}$. 
 For each $n \geq 2$ consider the $\mathbb Z_d^2$  diagonal action  on  $C^n$ defined by 
\[
g( z_1,\ldots, z_n ):=\big(\phi_1(g) \cdot   z_1 , \phi_2(g) \cdot  z_2 , \phi_2(g) \cdot  z_3, \ldots, \phi_2(g) \cdot  z_n \big)
\] and let  $X_{n,d}$ be the quotient variety $X_{n,d}:=C^n/\mathbb Z_d^2$. 

\begin{remark}\label{rem:defH}
The diagonal action is not free, indeed 
\[\Fix(\phi_1(g))\neq \emptyset  \text{ and } \Fix(\phi_2(g)) \neq \emptyset 
\Longleftrightarrow
 g\in H:=\left\langle \left(\frac d2,0\right), \left(0,\frac d2\right)\right\rangle.\]

Noting  that
${\phi_1}_{|H}=  {\phi_2}_{|H} $, we see that a point  $( z_1,\ldots, z_n )\in C^n$ has a non-trivial stabilizer if and only if  all its coordinates $z_i$ belong to one and only one of the three $\mathbb Z_d^2$-orbits displayed in the table above.

\end{remark}

\begin{proposition}\label{propOfXn}
For $n\geq 3$ the projective  variety $X_{n,d}$ is infinitesimally rigid and  of  general type. The singular locus of $X_{n,d}$ consists of 
$6\cdot d^{n-2}$  cyclic quotient singularities of type $\frac{1}{2}(1,\ldots,1)$. 
\end{proposition}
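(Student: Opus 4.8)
The plan is to establish the three assertions—general type, infinitesimal rigidity, and the classification of singularities—separately, with the singularity count being the main computational task.

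For the \emph{general type} claim, I would argue that $C$ is a smooth plane quartic, hence a curve of genus $3$, so it is of general type; therefore the product $C^n$ is a smooth projective variety of general type. Since $X_n$ is the quotient of $C^n$ by the finite group $\mathbb{Z}_4^2$, the Kodaira dimension is preserved: pluricanonical forms on $C^n$ that are $\mathbb{Z}_4^2$-invariant descend to sections of pluricanonical sheaves on $X_n$, and $\kod(X_n) = \kod(C^n) = n$. Concretely, $K_{C}$ is ample of positive degree, so $K_{C^n}$ is ample and its invariant pluricanonical ring grows with maximal Kodaira–Iitaka dimension.

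For \emph{infinitesimal rigidity}, I would use the fact, already recorded in the introduction, that $H^1(X_n,\Theta_{X_n})=H^1(C^n,\Theta_{C^n})^{\mathbb{Z}_4^2}$; this identity follows from the fact that $X_n$ has only quotient singularities (so reflexive differentials and the extension of $\Theta$ behave as expected) and that $C^n \to X_n$ is étale in codimension one. By the Künneth formula for $\Theta_{C^n}=\bigoplus_{i} \mathrm{pr}_i^*\Theta_C$, the space $H^1(C^n,\Theta_{C^n})$ decomposes as a sum of tensor products involving $H^0(C,\Theta_C)$, $H^1(C,\Theta_C)$, and $H^0(C,\mathcal{O}_C)=\mathbb{C}$. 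Since $C$ has genus $3$, $H^0(C,\Theta_C)=0$, so the only surviving summands are of the form $H^1(C,\Theta_C)$ in one factor tensored with $H^0(C,\mathcal{O}_C)$ in the others; that is, $H^1(C^n,\Theta_{C^n}) \cong \bigoplus_{i=1}^n H^1(C,\Theta_C)$. I would then show each summand has no $\mathbb{Z}_4^2$-invariants by analysing the induced action of $\phi_1$ (resp. $\phi_2$) on $H^1(C,\Theta_C)=H^1(C,\mathcal{O}(K_C^{\otimes 2})^\vee)$, equivalently on the dual $H^0(C, 2K_C)$; the point is that neither $\phi_1$ nor $\phi_2$ fixes a nonzero class, so the total invariant subspace vanishes.

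For the \emph{singular locus}, by Remark~\ref{rem:defH} a point of $C^n$ has nontrivial stabilizer exactly when all coordinates $z_i$ lie in a common $\mathbb{Z}_4^2$-orbit, and the stabilizer is then contained in $H=\langle(2,0),(0,2)\rangle$, on which $\phi_1$ and $\phi_2$ agree. I would enumerate the fixed loci by fixing one of the three special orbits of size four on $C$ and counting the tuples of coordinates lying in that orbit, dividing out by the group action; the local model at each such fixed point is the quotient of $\mathbb{C}^n$ by the order-two element acting as $-\mathrm{Id}$ (since the nontrivial stabilizer element of $H$ acts on each tangent line by $\zeta_4^2=-1$), giving precisely a singularity of type $\tfrac{1}{2}(1,\ldots,1)$. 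The main obstacle, and the place where care is required, is the bookkeeping to arrive at the exact count $3\cdot 2^{2n-3}$: one must correctly identify which elements of $H$ act trivially versus by $-\mathrm{Id}$ on which tuples, account for the three orbits and the interplay between $\phi_1$ on the first coordinate and $\phi_2$ on the remaining $n-1$ coordinates, and then quotient the set of fixed tuples by the free part of the residual group action. I expect the final number to emerge as $3$ (choices of orbit) times a power of $2$ counting orbit-representatives modulo the effective stabilizer, consistent with $3\cdot 2^{2n-3}$.
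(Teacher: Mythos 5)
Your skeleton matches the paper's proof (quasi-\'etale descent of $H^1(\Theta)$, K\"unneth with $H^0(C,\Theta_C)=0$, local model $-\mathrm{Id}$ at the fixed points), but there are genuine gaps at exactly the points where the proof has content. The most serious is the general type claim: the assertion that Kodaira dimension ``is preserved'' under a finite quotient is false in general, even for quasi-\'etale quotients. For example, an abelian surface $E\times E$ modulo the diagonal $\mathbb Z_3$-action $(x,y)\mapsto(\zeta_3 x,\zeta_3 y)$ is quasi-\'etale (isolated fixed points), yet the minimal resolution of the quotient is a rational surface. Invariant pluricanonical forms descend only to sections of the reflexive sheaves $\omega_{X_n}^{[m]}$ on the singular quotient; to conclude that a \emph{resolution} of $X_n$ --- which is what ``general type'' refers to --- has maximal Kodaira dimension, you must know that these sections extend over the exceptional locus, i.e.\ that the singularities are canonical. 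The paper supplies precisely this ingredient: by Reid's criterion a singularity of type $\frac12(1,\ldots,1)$ is terminal for $n\geq 3$, and only then does $\kappa(X_n)=\kappa(C^n)=n$ follow. Relatedly, for a \emph{singular} variety infinitesimal rigidity is governed by $\mathrm{Ext}^1(\Omega^1_{X_n},\mathcal O_{X_n})$, not by $H^1(X_n,\Theta_{X_n})$ alone; the paper bridges the two via Schlessinger's theorem (isolated quotient singularities in dimension $\geq 3$ are rigid) and the local-to-global Ext sequence, a step your proposal omits entirely.

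The two computations you defer are also the actual content of the statement. The vanishing of $H^1(C,\Theta_C)^{\mathbb Z_4^2}$ is asserted (``neither $\phi_1$ nor $\phi_2$ fixes a nonzero class'') but never proved; note that since $\phi_2=\phi_1\circ A$ with $A\in\GL(2,\mathbb Z_4)$, both actions have the \emph{same} image in $\Aut(C)$, so there is a single invariant space to compute, and the paper computes it by Beauville's branch-point formula: $\dim H^0(C,2K_C)^{\mathbb Z_4^2}=h^0\bigl(\mathbb P^1,\,2K_{\mathbb P^1}+\sum_{i=1}^{3}\lfloor 2(1-\tfrac14)\rfloor p_i\bigr)=h^0(\mathbb P^1,\mathcal O(-1))=0$. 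Without this (or an equivalent character-theoretic computation) the rigidity claim is unproven. Finally, the singularity count is left as an expectation rather than derived; the clean argument is orbit--stabilizer: there are $3\cdot 4^n$ tuples whose coordinates all lie in one of the three special orbits, each such tuple has stabilizer of order exactly $2$ (contained in $H$, where $\phi_1$ and $\phi_2$ agree), so its $\mathbb Z_4^2$-orbit has $16/2=8$ elements, giving $3\cdot 4^n/8=3\cdot 2^{2n-3}$ singular points.
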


\begin{proof}

By Remark \ref{rem:defH} there are  $3\cdot d^n$ points on $C^n$ with non-trivial stabilizer,
each generated by one of the order 2 elements in $\mathbb Z_d^2$.
 Thus,   $X_{n,d}$ has 
 $(3\cdot d^n)/(d^2/2)= 6\cdot d^{n-2}$ singularities of type  $\frac{1}{2}(1,\ldots,1)$.

   These singularities  are terminal if  
 $n\geq 3$, see  \cite[p.~376 Theorem]{R87}. Since the  
quotient map 
 $C^n \to X_{n,d}$ is quasi-\'etale,  $g(C)=(d-1)(d-2)/2\geq 3$ and  $X_{n,d}$ is terminal, its Kodaira dimension is 
 $\kappa(X_{n,d})=\kappa(C^n)=n$ (cf.~\cite[p.~51]{Cat07}).
 
According to  Schlessinger \cite{schlessinger},  isolated quotient singularities in dimension at least three are rigid,  i.e.
 $\mathcal Ext^1(\Omega_{X_{n,d}}^1, \mathcal O_{X_{n,d}}) =0$. Thus 
 the local-to-global ${\rm Ext}$ spectral sequence yields 
\[H^1(X_{n,d}, \Theta_{X_{n,d}}) \simeq {\rm Ext}^1(\Omega_{X_{n,d}}^1, \mathcal O_{X_{n,d}})\,.\]
Hence it suffices to verify  that $X_{n,d}$ has no equisingular deformations. 
Since $g(C)\geq 3$ we have $H^0(C, \Theta_C)=0$,  hence by K\"unneth formula we get  
\[H^1(C^n, \Theta_{C^n})=
\bigoplus_{i=1}^n H^1(C, \Theta_{C})\,.\]
Using the fact that  the quotient map  $ C^n \to X_{n,d}$ is quasi-\'etale and the action is diagonal, we obtain
\[ H^1(X_{n,d}, \Theta_{X_{n,d}})= H^1(C^n, \Theta_{C^n})^{\mathbb Z_d^2}= 
\bigoplus_{i=1}^n H^1(C, \Theta_{C})^{\mathbb Z_d^2}\,.\]
The branch locus $B$ of $f\colon C \to C/\mathbb Z_d^2 \simeq \mathbb P^1$ consists of 3
points $p_i$ with branch indices $m_{p_i}=d$, thus by \cite[Ex.~VI.12]{Beau78} we have 
\begin{eqnarray*}\dim H^1(C, \Theta_{C})^{\mathbb Z_d^2}&=&\dim H^0(C, 2K_{C})^{\mathbb Z_d^2}=
h^0(\mathbb P^1, 2K_{\mathbb P^1} + \sum_{p_i\in B}  p_i \cdot \lfloor2 (1-\frac 1{m_{p_i}})\rfloor)\\&=&
h^0(\mathbb P^1, \mathcal O(-1))=0 \,.\end{eqnarray*}
\end{proof}

\subsection{Resolution of singularities of type $\frac12(1,\ldots, 1)$} 

\begin{proposition}\label{Prop_Res}
A singularity  $U:=\mathbb C^n/\mathbb Z_2$ of type $\frac{1}{2}(1,\ldots,1)$ admits a  resolution 
$\rho \colon \widehat{U} \to U$ by a single  blow-up, with exceptional prime divisor $\mathbb P^{n-1}$.
If  $n\geq 3$,
\[
\rho_{\ast} \Theta_{\widehat{U}}=  \Theta_{U} \quad \makebox{and} \quad R^1\rho_{\ast} \Theta_{\widehat{U}}=0. 
\]
\end{proposition}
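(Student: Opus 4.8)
\emph{Setup for (1).} The singularity $U=\mathbb C^n/\mathbb Z_2$ is the affine cone over the second Veronese embedding $\mathbb P^{n-1}\hookrightarrow \mathbb P^N$, since the ring of invariants $\mathbb C[x_1,\dots,x_n]^{\mathbb Z_2}$ is generated by the quadrics $x_ix_j$. I would therefore realize the blow-up of the vertex in two equivalent ways. First, by lifting the involution $x\mapsto -x$ to $\mathrm{Bl}_0\mathbb C^n=\mathrm{Tot}(\mathcal O_{\mathbb P^{n-1}}(-1))$ and passing to the quotient: in the standard chart $(x_1,u_2,\dots,u_n)$ with $u_i=x_i/x_1$ the involution becomes $(x_1,u)\mapsto(-x_1,u)$, so the invariants are $(x_1^2,u_2,\dots,u_n)$ and the quotient is smooth, with exceptional set the image of the $\mathbb Z_2$-fixed divisor. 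Second, and equivalently, by blowing up the vertex of the cone $U$ directly: since $\mathbb P^{n-1}$ is smooth and projectively normal in the Veronese embedding, this single blow-up $\rho\colon\widehat U\to U$ is smooth and identifies $\widehat U$ with the total space of $\mathcal O_{\mathbb P^{n-1}}(-2)$, the exceptional divisor $E\cong\mathbb P^{n-1}$ being the zero section with normal bundle $N_{E/\widehat U}=\mathcal O(-2)$ (the fibre coordinate $x_1^2$ squares the tautological one). This proves (1) and fixes the global model used for (2).

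\emph{Reduction of (2) to cohomology on $\mathbb P^{n-1}$.} Write $\pi\colon\widehat U\to\mathbb P^{n-1}$ for the bundle projection. Since $U$ is affine and $\rho$ is an isomorphism over $U\setminus\{0\}$, the sheaf $R^1\rho_\ast\Theta_{\widehat U}$ is supported at the origin and, by the Leray sequence together with the affineness of $U$, $H^0(U,R^1\rho_\ast\Theta_{\widehat U})\cong H^1(\widehat U,\Theta_{\widehat U})$; hence $R^1\rho_\ast\Theta_{\widehat U}=0$ is equivalent to $H^1(\widehat U,\Theta_{\widehat U})=0$. Likewise the natural inclusion $\rho_\ast\Theta_{\widehat U}\hookrightarrow\Theta_U$ into the reflexive tangent sheaf $\Theta_U=j_\ast\Theta_{U\setminus\{0\}}$ is an isomorphism away from the origin, so its cokernel is supported at $0$ and vanishes precisely when the inclusion is surjective on global sections, i.e. when $H^0(\widehat U,\Theta_{\widehat U})=H^0(U,\Theta_U)$. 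As $\pi$ is an affine morphism, $R^{>0}\pi_\ast=0$ and $H^i(\widehat U,\Theta_{\widehat U})=H^i(\mathbb P^{n-1},\pi_\ast\Theta_{\widehat U})$, so everything is reduced to understanding the single sheaf $\pi_\ast\Theta_{\widehat U}$.

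\emph{The computation.} The relative tangent sequence of $\pi$ reads
\[
0\longrightarrow \pi^\ast\mathcal O_{\mathbb P^{n-1}}(-2)\longrightarrow \Theta_{\widehat U}\longrightarrow \pi^\ast\Theta_{\mathbb P^{n-1}}\longrightarrow 0,
\]
and since $\pi_\ast\mathcal O_{\widehat U}=\bigoplus_{k\ge0}\mathcal O(2k)$ I would push it forward (exactly, $\pi$ being affine) to present $\pi_\ast\Theta_{\widehat U}$ as an extension of $\bigoplus_{k\ge0}\Theta_{\mathbb P^{n-1}}(2k)$ by $\bigoplus_{k\ge0}\mathcal O(2k-2)$. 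For the $R^1$-vanishing I then invoke the standard facts $H^1(\mathbb P^{n-1},\mathcal O(d))=0$ for $n-1\ge2$ and, via the Euler sequence twisted by $\mathcal O(2k)$, $H^1(\mathbb P^{n-1},\Theta_{\mathbb P^{n-1}}(2k))=0$ for all $k\ge0$; the latter uses $H^2(\mathbb P^{n-1},\mathcal O(2k))=0$, which holds for $k\ge0$ exactly when $n\ge3$. This gives $H^1(\widehat U,\Theta_{\widehat U})=0$. For the $H^0$-comparison I exploit the $\mathbb C^\ast$-action contracting $U$ to its vertex, which grades everything by weight: on one side $\Theta_U=(\Theta_{\mathbb C^n})^{\mathbb Z_2}$ is spanned by the fields $x^\alpha\partial_{x_i}$ with $|\alpha|$ odd, so its weight-$2m$ part has dimension $n\binom{2m+n}{n-1}$; on the other side the weight-$2m$ part of $H^0(\mathbb P^{n-1},\pi_\ast\Theta_{\widehat U})$ has dimension $h^0(\mathbb P^{n-1},\mathcal O(2m))+h^0(\mathbb P^{n-1},\Theta_{\mathbb P^{n-1}}(2m))$, which I compute from the twisted Euler sequence to be the same $n\binom{2m+n}{n-1}$. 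Matching weights then forces the inclusion $\rho_\ast\Theta_{\widehat U}\hookrightarrow\Theta_U$ to be an equality.

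\emph{Main obstacle.} The formal reductions are routine; the real content is the identification of $\pi_\ast\Theta_{\widehat U}$ and the weight-by-weight bookkeeping that makes the two descriptions of $H^0$ agree, together with pinning down the precise twist $\mathcal O(-2)$ of the exceptional normal bundle (this is what replaces $\mathcal O(-1)$ after the $\mathbb Z_2$-quotient and is responsible for the even gradings throughout). I expect the only place where $n\ge3$ is genuinely used is the vanishing of $H^1$ and $H^2$ of line bundles on $\mathbb P^{n-1}$ feeding the Euler sequence; for $n=2$ the base is $\mathbb P^1$ and both the vanishing and the dimension count break down, consistent with the fact that $\tfrac12(1,1)$ is not terminal and its minimal resolution behaves differently.
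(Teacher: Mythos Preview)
Your argument is correct. Note, however, that the paper does not actually supply a proof of this proposition: it simply refers the reader to Schlessinger's \emph{Rigidity of quotient singularities} (proof of Theorem~4) and to Bauer--Gleissner, \emph{Fermat's cubic, Klein's quartic and rigid complex manifolds of Kodaira dimension one} (Corollary~5.9 and Proposition~5.10). So there is no ``paper's own proof'' to compare against in any substantive sense.

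What you have written is a self-contained reconstruction of the content behind those references. The identification $\widehat U\cong\mathrm{Tot}\bigl(\mathcal O_{\mathbb P^{n-1}}(-2)\bigr)$, the reduction of $R^1\rho_\ast\Theta_{\widehat U}=0$ to $H^1(\widehat U,\Theta_{\widehat U})=0$ via Leray and affineness of $U$, and the computation through the relative tangent sequence pushed forward along the affine map $\pi$ are exactly the standard route. Your bookkeeping is accurate: in particular the weight-$2m$ piece on the resolution side really is $h^0(\mathcal O(2m))+h^0(\Theta(2m))$ (the vertical summand $\mathcal O(2k-2)$ sits in weight $2k-2$, so $k=m+1$ contributes $\mathcal O(2m)$; the horizontal summand $\Theta(2k)$ sits in weight $2k$, so $k=m$ contributes $\Theta(2m)$), and the twisted Euler sequence collapses this to $n\binom{2m+n}{n-1}$, matching the invariant count on $\Theta_U=(\Theta_{\mathbb C^n})^{\mathbb Z_2}$. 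You also correctly isolate the one place $n\ge 3$ enters, namely the vanishing of $H^1$ and $H^2$ of line bundles on $\mathbb P^{n-1}$ feeding the Euler sequence; your closing remark that the $A_1$ surface case breaks both conclusions is consistent with the paper's Remark following the proposition.
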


For a proof we refer to \cite[proof of Theorem 4]{schlessinger}, see also \cite[Corollary 5.9, Proposition 5.10]{BG20}.

\begin{remark}[see {\cite[Remark 5.4]{BG20}}]\label{rem:res}
Both properties are not obvious and in general even false.
For any resolution $\rho: Z'\to Z$ of a normal variety $Z$, the direct image
$\rho_*\Theta_{Z'}$ is a subsheaf of the reflexive sheaf $\Theta_Z$,
and this inclusion is in general strict: e.g.~take the blow-up of the origin of $\mathbb C^2$. 

The vanishing of $R^1\rho_*\Theta_{Z'}$ is also not automatic: take the resolution of an $A_1$ surface singularity (i.e.~$\frac 12 (1,1)$) by a $-2$ curve, then $R^1\rho_*\Theta_{Z'}$ is a skyscraper sheaf at the singular point with value $H^1(\mathbb P^1,\mathcal O(-2))\cong \mathbb C$. More generally, for canonical ADE surface singularities $R^1\rho_*\Theta_{Z'}$ is never zero, cf. \cite{BW74,pinkham,schlessinger}.

\end{remark} 

\begin{corollary}\label{TheReso}
Let $Z_n$ be a projective variety of dimension $n \geq 3$ with only singularities
of type $\frac{1}{2}(1,\ldots,1)$. Then there 
 exists a resolution $\rho \colon \widehat{Z}_n \to Z_n$, such that 
 \[
 H^1(Z_n, \Theta_{Z_n}) \simeq H^1(\widehat{Z}_n, \Theta_{\widehat{Z}_n}).  
 \]
 In particular, if $Z_n$ is infinitesimally rigid, so is $\widehat{Z}_n$. 
 \end{corollary}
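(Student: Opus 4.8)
The plan is to resolve the isolated singularities one at a time and then transport the local statements of Proposition~\ref{Prop_Res} to global cohomology by means of the Leray spectral sequence. Since every singular point of $Z_n$ is of type $\frac{1}{2}(1,\ldots,1)$, hence isolated, I would first blow up each of them once; by Proposition~\ref{Prop_Res} this produces a resolution $\rho\colon \widehat{Z}_n \to Z_n$ which is an isomorphism over the smooth locus $Z_n \setminus \Sing(Z_n)$. Near each singular point $p$ the variety $Z_n$ is analytically isomorphic to $U = \mathbb{C}^n/\mathbb{Z}_2$, and there $\rho$ is precisely the blow-up of $U$ at the origin, so the local model of Proposition~\ref{Prop_Res} applies near every point of $\Sing(Z_n)$.

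Next I would determine the direct image sheaves $R^q\rho_*\Theta_{\widehat{Z}_n}$. As their stalks may be computed analytically-locally on the base, over $Z_n \setminus \Sing(Z_n)$ one trivially has $\rho_*\Theta_{\widehat{Z}_n} = \Theta_{Z_n}$ and $R^1\rho_*\Theta_{\widehat{Z}_n} = 0$, because $\rho$ is an isomorphism there, while over a neighbourhood of each singular point the stalks coincide with those of $\widehat{U}\to U$. By Proposition~\ref{Prop_Res} the latter are $\Theta_U$ and $0$ respectively, and the two descriptions glue to
\[
\rho_*\Theta_{\widehat{Z}_n} = \Theta_{Z_n} \qquad \text{and} \qquad R^1\rho_*\Theta_{\widehat{Z}_n} = 0.
\]

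Finally I would substitute this into the five-term exact sequence attached to the Leray spectral sequence $E_2^{p,q} = H^p(Z_n, R^q\rho_*\Theta_{\widehat{Z}_n}) \Rightarrow H^{p+q}(\widehat{Z}_n, \Theta_{\widehat{Z}_n})$, namely
\[
0 \to H^1(Z_n, \rho_*\Theta_{\widehat{Z}_n}) \to H^1(\widehat{Z}_n, \Theta_{\widehat{Z}_n}) \to H^0(Z_n, R^1\rho_*\Theta_{\widehat{Z}_n}).
\]
Since the rightmost term vanishes and the leftmost equals $H^1(Z_n, \Theta_{Z_n})$, this yields at once the isomorphism $H^1(Z_n, \Theta_{Z_n}) \simeq H^1(\widehat{Z}_n, \Theta_{\widehat{Z}_n})$. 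The final assertion is then immediate: if $H^1(Z_n, \Theta_{Z_n}) = 0$, then $H^1(\widehat{Z}_n, \Theta_{\widehat{Z}_n}) = 0$, so $\widehat{Z}_n$ is infinitesimally rigid.

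The one point requiring care is the globalization step: one must check that resolving all singular points simultaneously is genuinely compatible with the local analytic model around each, so that the sheaf identities of Proposition~\ref{Prop_Res} patch together over $Z_n$. Once the higher direct images are pinned down, everything else is a formal consequence of the Leray spectral sequence, and I expect no further difficulty.
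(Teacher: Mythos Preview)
Your argument is correct and follows essentially the same route as the paper: resolve the isolated $\frac{1}{2}(1,\ldots,1)$ singularities simultaneously via Proposition~\ref{Prop_Res}, deduce $\rho_*\Theta_{\widehat{Z}_n}=\Theta_{Z_n}$ and $R^1\rho_*\Theta_{\widehat{Z}_n}=0$ from the local statements, and then invoke the Leray spectral sequence. The paper's proof is terser, merely citing Leray, whereas you spell out the five-term exact sequence and the local-to-global gluing, but there is no substantive difference.
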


\begin{proof}
Since the singularities of $Z_n$ are isolated,  we resolve them simultaneously using Proposition \ref{Prop_Res}  and we get  a resolution $\rho \colon \widehat{Z}_n \to Z_n$ having the same properties:  
\[
\rho_{\ast} \Theta_{\widehat{Z}_n}=  \Theta_{Z_n} \quad \makebox{and} \quad R^1\rho_{\ast} \Theta_{\widehat{Z}_n}=0. 
\]
Leray's spectral sequence  implies  $H^1(\widehat{Z}_n,\Theta_{\widehat{Z}_n}) \simeq H^1(Z_n, \Theta_{Z_n})$.  
\end{proof}

By the corollary, for $n\geq 3$  there exists a resolution  $\widehat{ X_{n,d}}\to  X_{n,d}$ of the singularities of $X_{n,d}$, which is infinitesimally rigid.
By Remark \ref{rem:res},  for $n=2$ the minimal resolution   $\widehat{ X_{2,d}}$ of  $X_{2,d}$ is not infinitesimally rigid, nevertheless the main theorem of \cite{BP18} shows that $\widehat{ X_{2,d}}$  is rigid for $d\geq 8$, whereas 
$\widehat{ X_{2,4}}$  is a numerical Campedelli surface, whose Kuranishi family has dimension $6$.

\subsection{Non-\'etale infinitesimally rigidity}
 We conclude this section constructing  an \'etale cover of $\widehat{X_{n,d}}$ which is not infinitesimally rigid, thus 
$\widehat{X_{n,d}}$ is not \'etale infinitesimally rigid.

Let $H:=\left\langle \left(\frac d2,0\right), \left(0,\frac d2\right)\right\rangle$ be as in Remark \ref{rem:defH}.

\begin{lemma}\label{lem:Y}
Let $Y_{n,d}:=C^n/H$ be the quotient with respect to the restricted diagonal action, then:
\begin{enumerate}
\item
The natural morphism $\psi \colon Y_{n,d} \to X_{n,d}$ 
is an unramified Galois cover with group $\mathbb Z_{d/2}^2$. 
\item
$h^1(Y_{n,d}, \Theta_{Y_{n,d}})= 
3n\cdot \left(\frac{d^2-2d}8\right)$.
\end{enumerate}
\end{lemma}

\begin{proof}
(1)  Since $H$ is a normal subgroup of $\mathbb Z_d^2$  the map $\psi$ is a Galois cover with
group $\mathbb Z_d^2/H\cong \mathbb Z_{d/2}^2$.
By Remark \ref{rem:defH} the stabilizer of a point $z \in C^n$  with respect to the $\mathbb Z_d^2$-action is contained in $H$,
whence the map $\psi$ is unramified.

(2) Since $C\to C/H $ is branched in $\frac{3d}2$ points and 
$g(C/H)=\frac{(d-2)(d-4)}8$, we have
\[
\dim\big(H^1(C^n, \Theta_{C^n})^H)= n \cdot \dim\big(H^1(C, \Theta_{C})^H \big)= 3n\left(\frac{d^2-2d}8\right)\, \]
 arguing as in 
Proposition \ref{propOfXn}.
\end{proof}

\section{The universal cover of $\widehat{ X_{n,d}}$}\label{sec:UC}

In this section we prove  that the universal cover $U_{n,d}$ of $\widehat{ X_{n,d}}$ is non-contractible, and then we discuss whether it is projective or not.

\begin{proposition}\label{Prop:non-contr}
Let $X$ be a  compact K\"ahler manifold, containing a $\mathbb P^m$. 
Then the universal cover $U$ of $X$ is non-contractible.
\end{proposition}
\begin{proof} 
Since $\mathbb P^m$ is simply connected, the inclusion map $i\colon \mathbb P^m \hookrightarrow X$ lifts to a map $f\colon\mathbb P^m \to U$. 
Looking for a contradiction, assume that $U$ is contractible, then $f$ is homotopic to a constant map, therefore  the inclusion 
$i$ is also  homotopic to a constant map.
In particular we see that the induced linear map $i^*: H^2(X, \mathbb C) \to H^2(\mathbb P^m, \mathbb C)$ is the zero map.
Now let $[\omega]$  be a K\"ahler class of  $X$. Its restriction $i^*([\omega])$
 is a K\"ahler class of $\mathbb P^m$, whence non zero, contradiction.
\end{proof}
\begin{corollary} The universal cover $U_{n,d}$ of $\widehat{ X_{n,d}}$ is non-contractible.
\end{corollary}

\begin{proof}
By Proposition \ref{Prop_Res} $\widehat{ X_{n,d}}$ contains several $\mathbb P^{n-1}$.
\end{proof}

\begin{remark} By Lemma \ref{lem:Y} the universal cover $U_{n,d}$ of $\widehat{X_{n,d}}$ is not infinitesimally rigid.
\end{remark}

\subsection{ The  Fundamental Group}\label{sec:fg} \

In this section we  discuss the finiteness of the fundamental group 
$\pi_1(\widehat{ X_{n,d}}) $. 
In order to do this we use the main theorem of \cite{armstrong} in the case of  product quotient varieties following \cite{BCGP12,DP10}. We briefly recall their strategy and we refer to them for further details.

Let $G$ be a finite group acting diagonally on a product  $Z:=C_1\times \ldots \times C_n$ of curves of genus at least 2, and consider the group $\mathbb G$ of all possible lifts of  automorphisms  induced by 
the action of $G$ on $Z$  to the universal cover $u:\mathbb H^n \to Z$.
The group $\mathbb G $ acts properly discontinuously on  $\mathbb H^n$ 
and $u$ is equivariant with respect to the natural map $\mathbb G\to G$, hence  we have 
an isomorphism $\mathbb H^n/\mathbb G \cong Z/G  $.
Since $\mathbb H^n$ is simply connected we can apply Armstrong's results (see \cite{armstrong}) 
and get the following.

\begin{proposition}\label{prop:fund}
Let $\Fix(\mathbb{G})$ be the normal subgroup of $\mathbb G$ generated by the elements having non-empty fixed locus. Then 
\[\pi_1(Z/G)=\mathbb G / \Fix(\mathbb G)\,.\]

\end{proposition}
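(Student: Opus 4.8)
The plan is to deduce Proposition~\ref{prop:fund} directly from Armstrong's theorem, so the main task is to verify that its hypotheses are met by the covering $u\colon \mathbb H^n \to Z$ together with the action of $\mathbb G$. Recall that Armstrong's result \cite{armstrong} states that if a group $\Gamma$ acts properly discontinuously on a simply connected, locally compact and path-connected (indeed here a manifold) space $W$, then the quotient $W/\Gamma$ has fundamental group $\Gamma/\Gamma_0$, where $\Gamma_0$ is the normal subgroup generated by those elements of $\Gamma$ that have a fixed point. So the first step is to record the three ingredients that make this applicable: $\mathbb H^n$ is simply connected; the action of $\mathbb G$ is properly discontinuous (as asserted in the text above, since $\mathbb G$ is a group of deck-type transformations commensurable with the deck group of $u$); and the quotient is identified, $\mathbb H^n/\mathbb G \cong Z/G$.

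The second step is to make the identification $\mathbb H^n/\mathbb G \cong Z/G$ precise and to track the fixed-point structure through it. Since $u\colon \mathbb H^n \to Z$ is the universal covering and $\mathbb G$ is by definition the group of all lifts to $\mathbb H^n$ of the automorphisms of $Z$ coming from $G$, there is a short exact sequence
\[
1 \longrightarrow \pi_1(Z) \longrightarrow \mathbb G \longrightarrow G \longrightarrow 1,
\]
where $\pi_1(Z)$ is the deck group of $u$ (it acts freely, being the group of deck transformations of a covering). The equivariance of $u$ with respect to $\mathbb G \to G$ gives a commutative diagram and shows that $u$ descends to a homeomorphism $\mathbb H^n/\mathbb G \xrightarrow{\ \sim\ } Z/G$, because the orbits of $\mathbb G$ on $\mathbb H^n$ are exactly the preimages under $u$ of the $G$-orbits on $Z$.

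The third step is simply to apply Armstrong's theorem to $W = \mathbb H^n$ and $\Gamma = \mathbb G$, which yields $\pi_1(Z/G) \cong \mathbb G/\Fix(\mathbb G)$, where $\Fix(\mathbb G)$ is exactly the normal subgroup generated by elements of $\mathbb G$ with non-empty fixed locus. This matches the statement verbatim. The one point requiring a word of justification is that the elements of $\pi_1(Z) \subset \mathbb G$ act without fixed points (being deck transformations), so only the genuine lifts of non-trivial stabilizer elements of $G$ contribute to $\Fix(\mathbb G)$; this is what makes the quotient $\mathbb G/\Fix(\mathbb G)$ a finite group in the applications that follow, but for the proposition itself it is not needed.

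The step I expect to be the main obstacle is the verification that $\mathbb G$ acts properly discontinuously on $\mathbb H^n$. The subtlety is that $\mathbb G$ is larger than the deck group $\pi_1(Z)$ and contains lifts with fixed points, so proper discontinuity does not follow formally from $u$ being a covering map; one must argue that each $G$-orbit on $Z$ is finite and that the local structure of the $G$-action (which is by biholomorphisms with at most finite stabilizers) lifts to a properly discontinuous action upstairs. Concretely, one checks that $\mathbb G$ is a discrete subgroup of the isometry group $\Aut(\mathbb H^n)$ containing $\pi_1(Z)$ as a finite-index normal subgroup, whence proper discontinuity of $\mathbb G$ follows from that of $\pi_1(Z)$ together with finiteness of $G$. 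Once this is in place, the remaining steps are formal applications of the cited results.
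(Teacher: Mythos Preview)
Your proposal is correct and follows exactly the route the paper takes: the paper does not give a self-contained argument but simply sets up the $\mathbb G$-action on $\mathbb H^n$, notes that $\mathbb H^n$ is simply connected and that $\mathbb H^n/\mathbb G\cong Z/G$, and invokes Armstrong's theorem. You have filled in the verification of Armstrong's hypotheses (notably the proper discontinuity of $\mathbb G$, via the finite-index inclusion $\pi_1(Z)\lhd\mathbb G$), which the paper asserts without further comment.
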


Assume that the $G$-action on $Z$ restricts to a faithful action $\phi_i$ on each factor $C_i$.
Let $\mathbb T_i $ be the group 
 of all possible lifts of  automorphisms  induced by 
the action of $G$ on $C_i$  to the universal cover $\mathbb H$ of $C_i$, and  let $\varphi_i :\mathbb T_i\to G$ be  the natural map.
In this setting, the above group $\mathbb G$ is the preimage of the diagonal subgroup $\Delta_G \subset G^n$ under
$\varphi_1\times \ldots\times \varphi_n$:
\[\mathbb G =\{(x_1,\ldots,x_n)\in \mathbb T_1\times \cdots \times \mathbb T_n\mid 
\varphi_1(x_1)=\ldots=\varphi_n(x_n)\}\,.\]

There is also  a similar description of $\mathbb G$  in the non-faithful case, see  \cite[Proposition 3.3]{DP10}.

\begin{remark}\label{rem:OSG}
i) The group $\mathbb T_i$ has a simple presentation (see also \cite[Example 29]{catMod}):
let $g'$ be the genus of $C_i/G$ and $m_1, \ldots, m_r$ be the ramification indices of the 
branch points of the covering map $C_i \to C_i/G$, then 
\[\mathbb T_i= \mathbb{T}(g';m_1,\ldots ,m_r):=
\langle a_1,b_1,\ldots, a_{g'},b_{g'},c_1, \ldots, c_r \mid
 c_1^{m_1}, \ldots, c_r^{m_r},\prod_{i=1}^{g'} [a_i,b_i]\cdot c_1 \cdots c_r\rangle\,.\]
ii) The  group $ \mathbb{T}(g';m_1,\ldots ,m_r)$ is called the
\textit{orbifold surface group} of type $[g';m_1, \ldots, m_r]$.

The non-trivial stabilizers of the $\mathbb T_i$-action on $\mathbb H$ are cyclic and  generated by the  conjugates of the elements $c_k$.
The restriction of $\varphi_i$ to each one of these subgroups is an isomorphism onto its image, which is the stabilizer of a point in $C_i$. Conversely, all non-trivial stabilizers of the $G$-action on $C_i$  are of this form
(see \cite{BCGP12}).

\end{remark}

 \begin{definition}
 Let $L_i\subset \mathbb T_i$ be set of  the elements 
$ c_j^{l_j} \in  \mathbb T_i$ such that $\varphi_i(c_j^{l_j}) \in G$ has non-empty fixed locus on $Z=C_1\times \ldots \times C_n$, where  $j \in \{1,\ldots, r\}$ and $l_j \in \{1,\ldots, m_j-1\}$.

We denote by $\langle\langle L_i\rangle\rangle_{\mathbb T_i}$  the normal subgroup of $\mathbb T_i$ generated by $L_i$.
\end{definition}

\begin{proposition}[Finiteness criterion]\label{prop:fin_crit}
The group $\pi_1(Z/G)=\mathbb G / \Fix(\mathbb G)$ is finite if and only if
the groups $\mathbb T_i/\langle\langle L_i\rangle\rangle_{\mathbb T_i}$ are finite.
\end{proposition}

\proof
According to \cite[pag.1018-1019]{BCGP12} the group $\mathbb G / \Fix(\mathbb G)$ fits in an exact sequence
\[1\to E \to \mathbb G / \Fix(\mathbb G)\to \mathbf H\to 1,\]
where $E$ is a finite group and $\mathbf{H}$ is a subgroup of finite index of  the product
\[\mathbb T_1/\langle\langle L_1\rangle\rangle_{\mathbb T_1} \times \cdots \times \mathbb T_n/\langle\langle L_n\rangle\rangle_{\mathbb T_n}\,.\qquad \qed\]

\begin{remark}
Let $X$ be a normal  variety  with only quotient  singularities, and let $\rho \colon \widehat X\to X$ be a resolution
of  singularities. Then $\rho_*\colon \pi_1(\widehat X) \to \pi_1(X)$ is an isomorphism, 
by \cite[Theorem 7.8]{Kollar}.

In particular, $\pi_1(\widehat{X_{n,d}}) \simeq \pi_1(X_{n,d})$.
\end{remark}

According to the description of $X_{n,d}$  given in the previous section 
its associated  orbifold surface groups $\mathbb T_i$ are all of type  $ [0;d,d,d]$, and applying this discussion to our situation we get the following.

\begin{theorem}
 The universal cover $U_{n,d}$ of $\widehat{ X_{n,d}}$ is projective if and only if $d=4$.
\end{theorem}
\begin{proof}
The universal cover $U_{n,d}$ of $\widehat{ X_{n,d}}$ is projective if and only if the fundamental group 
$\pi_1(\widehat{ X_{n,d}}) $ is finite.
Therefore, by Propositon \ref{prop:fin_crit} $U_{n,d}$ is projective if and only if the groups $\mathbb T_i/\langle\langle L_i\rangle\rangle_{\mathbb T_i}$ are finite.
Let $k:=\frac d2$. 
Since the elements in $\mathbb Z_d^2$ fixing points on $C^n$ are exactly the elements in $H=\langle (k,0), (0,k)\rangle $, by Remark \ref{rem:OSG} ii) we see that $L_i=\{c_1^k,c_2^k, c_3^k\}$, whence
\[\mathbb T_i/\langle\langle L_i\rangle\rangle_{\mathbb T_i} \cong\mathbb T(0;d,d,d)/\langle\langle c_1^k,c_2^k,c_3^k\rangle\rangle= \langle c_1, c_2,c_3 |
 c_1^k,  c_2^k,  c_3^k, c_1 c_2 c_3\rangle\, \cong \mathbb T(0;k,k,k)\,.\]
The statement follows since the group $\mathbb T(0;2,2,2)\cong \mathbb Z_2^2$  is finite, whereas 
$\mathbb T(0;k,k,k)$ is infinite for $k>2$.
\end{proof}

\begin{remark}

i) The first Betti number $b_1$  of $Y_{n,4}$ is zero, because the quotient 
$C/H$ is  isomorphic to the projective line.
Indeed by K\"unneth formula  and \cite[\S 1.2]{McDonald}
we have \[H^1(Y_{n,4}, \mathbb C)=H^1(C^n, \mathbb C)^H
=\bigoplus H^1(C, \mathbb C)^H=\bigoplus H^1(\mathbb P^1, \mathbb C)= 0 \,.\]

Assuming $d=4$, we can actually prove that $g^2=1$ for all $g \in \pi_1(Y_{n,4})=\mathbb G / \Fix(\mathbb G)$. This tells us that 
$\pi_1(Y_{n,4})= \pi_1(\widehat {Y_{n,4}}) \cong \mathbb Z_2^s$ for some $s\in \mathbb N$.

The element $g$ is represented by an $n$-tuple
\[
(w_1, \ldots, w_n) \in \mathbb G= \mathbb T_1\times_H \cdots \times_H \mathbb T_n
\]
where $\mathbb T_k=\mathbb T(0;2,2,2,2,2,2)$ and all the maps
$\varphi_k:\mathbb T_k \to H$ are equal,  as we consider the same action on each factor (see Remark \ref{rem:defH}).
Since $\varphi_k(w_k^2)=(0,0)\in H=\mathbb Z_2^2$, the tuple 
\[
(1, \ldots, 1, w_k^2, 1 \ldots, 1) 
\]
belongs to $\mathbb G$, and to prove the claim it suffices to show that this tuple  is contained in $\Fix(\mathbb G)$. 

Note that the number of occurrences $n_i$ of the letter $c_i$ in the word $w_k^2$ is even.
Observe now,  that in any group a product $a\cdot b$ can be written as $b\cdot (b^{-1} \cdot a \cdot b)$,  hence we can write $w_k^2$ as
\begin{equation}\label{eq:w2}
w_k^2= \bigg(\prod_{i=1}^{n_1} g_{i}^{-1}  c_1 g_{i}\bigg) \cdot \ldots \cdot  \bigg(\prod_{j=1}^{n_6} h_{j}^{-1}  c_6 h_{j} \bigg) \,,
\end{equation}
for certain  $g_i, \ldots,h_j \in \mathbb T_k$. 

By Remark \ref{rem:OSG} ii)
 and since $H$ is abelian,   we get
$( c_1, \ldots, c_1, g_{i}^{-1}  c_1 g_{i}, c_1, \ldots, c_1) 
 \in \Fix(\mathbb G)$.
We conclude that
\[
(1, \ldots, 1, \prod_{i=1}^{n_1} g_{i}^{-1}  c_1 g_{i}, 1 \ldots, 1) 
= \prod_{i=1}^{n_1}  ( c_1, \ldots, c_1, g_{i}^{-1}  c_1 g_{i}, c_1, \ldots, c_1) 
 \in \Fix(\mathbb G)\,.\]
The same applies to each factor in the  RHS of \eqref{eq:w2} and so $(1, \ldots, 1, w_k^2, 1 \ldots, 1)\in \Fix(\mathbb G)$. This shows $g^2=1$, whence $\pi_1(Y_{n,4})$ is abelian, and it is finite since 
 $\pi_1(Y_{n,4})= \pi_1(Y_{n,4})^{ab}=H_1(Y_{n,4}, \mathbb Z)$ has rank $0$.

ii) We implemented Proposition \ref{prop:fund} using the computer algebra system MAGMA \cite{Magma}, and 
we found  $\pi_1(Y_{n,4})= \mathbb Z_2^{n-1}$ and $\pi_1(X_{n,4})= \mathbb Z_2^{n+1}$
for $n=2,3,4,5$. 
 In particular, the  universal cover  of the varieties $X_{n,4}$ and $Y_{n,4}$ has 
$3 \cdot 2^{3n-2}$ singularities of type $\frac{1}{2}(1,\ldots,1)$. 
We expect the above to generalize to  any dimension.
\end{remark}

\begin{remark} \label{rem:final} The surfaces $\widehat{X_{2,d}}$ with $d\geq 8$ are rigid but not infinitesimally rigid (see \cite{BP18}), and their universal cover is non-contractible. This answer partially the question posed in the Introduction in the case of surfaces.
\end{remark}

\begin{bibdiv}
\begin{biblist}

\bib{armstrong}{article}{
      author={Armstrong, M.A.},
       title={The fundamental group of the orbit space of a discontinuous
  group},
        date={1968},
     journal={Proc. Cambridge Phil. Soc.},
      volume={64},
       pages={299\ndash 301},
}

\bib{rigidity}{article}{
      author={Bauer, I.},
      author={Catanese, F.},
       title={On rigid compact complex surfaces and manifolds},
        date={2018},
     journal={Adv. Math.},
      volume={333},
       pages={620\ndash 669},
}

\bib{BCGP12}{article}{
      author={Bauer, I.},
      author={Catanese, F.},
      author={Grunewald, F.},
      author={Pignatelli, R.},
       title={Quotients of products of curves, new surfaces with $p_g=0$ and
  their fundamental groups},
        date={2012},
     journal={American Journal of Mathematics},
      volume={134},
      number={4},
       pages={993\ndash 1049},
}

\bib{Magma}{article}{
      author={Bosma, W.},
      author={Cannon, J.},
      author={Playoust, C.},
       title={The {M}agma algebra system. {I}. {T}he user language},
        date={1997},
     journal={J. Symbolic Comput.},
      volume={24},
      number={3-4},
       pages={235\ndash 265},
        note={Computational algebra and number theory (London, 1993)},
}

\bib{Beau78}{book}{
      author={Beauville, A.},
       title={Surfaces alg\'ebriques complexes},
   publisher={{Asterisque \textbf{54}, Soc.Math. France}},
        date={1978},
}

\bib{BG20}{article}{
      author={Bauer, I.},
      author={Gleissner, C.},
       title={Fermat's cubic, {K}lein's quartic and rigid complex manifolds of
  {K}odaira dimension one},
        date={2020},
     journal={Doc. Math.},
      volume={25},
       pages={1241\ndash 1262},
}

\bib{BG21}{article}{
      author={Bauer, I.},
      author={Gleissner, C.},
       title={{Towards a Classification of Rigid Product Quotient Varieties of
  Kodaira Dimension 0}},
        date={2021},
      eprint={ArXiv:2101.06925},
}

\bib{BBP20}{article}{
      author={{B{\"o}hning}, C.},
      author={{Graf von Bothmer}, H.-C.},
      author={{Pignatelli}, R.},
       title={{A rigid, not infinitesimally rigid surface with K ample}},
        date={2020},
      eprint={ArXiv: 2010.14371},
}

\bib{BP18}{article}{
      author={Bauer, I.},
      author={Pignatelli, R.},
       title={{Rigid but not infinitesimally rigid compact complex manifolds}},
        date={2018},
      eprint={ArXiv:1805.02559},
        note={To appear in Duke Mathematical Journal},
}

\bib{BW74}{article}{
      author={Burns, D.~M., Jr.},
      author={Wahl, J.~M.},
       title={Local contributions to global deformations of surfaces},
        date={1974},
     journal={Invent. Math.},
      volume={26},
       pages={67\ndash 88},
         url={https://doi.org/10.1007/BF01406846},
}

\bib{Cat00}{article}{
      author={Catanese, F.},
       title={Fibred surfaces, varieties isogenous to a product and related
  moduli spaces.},
        date={2000},
     journal={American Journal of Mathematics},
      volume={122},
      number={1},
       pages={1\ndash 44},
}

\bib{Cat07}{article}{
      author={Catanese, F.},
       title={{Q.E.D.} for algebraic varieties.},
        date={2007},
     journal={J. Differential Geom.},
      volume={77},
      number={1},
       pages={43\ndash 75},
}

\bib{catMod}{article}{
      author={Catanese, F.},
       title={Topological methods in moduli theory},
        date={2015},
     journal={Bull. Math. Sci.},
      volume={5},
      number={3},
       pages={287\ndash 449},
}

\bib{CF18}{article}{
      author={Cancian, N.},
      author={Frapporti, D.},
       title={On semi-isogenous mixed surfaces},
        date={2018},
     journal={Math. Nachr.},
      volume={291},
      number={2-3},
       pages={264\ndash 283},
}

\bib{DP10}{article}{
      author={Dedieu, T.},
      author={Perroni, F.},
       title={The fundamental group of a quotient of a product of curves},
        date={2012},
     journal={J. Group Theory},
      volume={15},
      number={3},
       pages={439\ndash 453},
}

\bib{FG19}{article}{
      author={Frapporti, D.},
      author={Glei{\ss}ner, C.},
       title={A family of threefolds of general type with canonical map of high
  degree},
        date={2020},
     journal={Taiwanese J. Math.},
      volume={24},
      number={5},
       pages={1107\ndash 1115},
}

\bib{FGP18}{inproceedings}{
      author={{Favale}, F.},
      author={{Gleissner}, C.},
      author={{Pignatelli}, R.},
       title={The pluricanonical systems of a product-quotient variety},
        date={2020},
   booktitle={Galois covers, grothendieck-teichm{\"u}ller theory and dessins
  d'enfants},
      editor={Neumann, Frank},
      editor={Schroll, Sibylle},
   publisher={Springer International Publishing},
     address={Cham},
       pages={89\ndash 119},
}

\bib{GPR18}{article}{
      author={{Gleissner}, C.},
      author={{Pignatelli}, R.},
      author={{Rito}, C.},
       title={{New surfaces with canonical map of high degree}},
        date={2018},
      eprint={ArXiv: 1807.11854},
        note={To appear in Commun. Anal. Geom},
}

\bib{Hatcher}{book}{
      author={Hatcher, A.},
       title={Algebraic topology},
   publisher={Cambridge university press},
        date={2002},
         url={http://www.math.cornell.edu/~hatcher/AT/AT.pdf},
}

\bib{Kollar}{article}{
      author={Koll{\'a}r, J.},
       title={Shafarevich maps and plurigenera of algebraic varieties},
        date={1993},
     journal={Invent. Math.},
      volume={113},
      number={1},
       pages={177\ndash 215},
}

\bib{LP16}{article}{
      author={L\"{o}nne, M.},
      author={Penegini, M.},
       title={On asymptotic bounds for the number of irreducible components of
  the moduli space of surfaces of general type {II}},
        date={2016},
     journal={Doc. Math.},
      volume={21},
       pages={197\ndash 204},
}

\bib{LP18}{article}{
      author={{L{\"o}nne}, M.},
      author={{Penegini}, M.},
       title={{On Zariski Multiplets of Branch Curves from Surfaces Isogenous
  to a Product}},
        date={2020},
     journal={Michigan Math. J.},
      volume={69},
      number={4},
       pages={779 \ndash  792},
}

\bib{McDonald}{article}{
      author={Macdonald, I.~G.},
       title={Symmetric products of an algebraic curve},
        date={1962},
     journal={Topology},
      volume={1},
       pages={319\ndash 343},
}

\bib{kodairamorrow}{book}{
      author={Morrow, J.},
      author={Kodaira, K.},
       title={Complex manifolds},
   publisher={Holt, Rinehart and Winston, Inc., New York-Montreal,
  Que.-London},
        date={1971},
}

\bib{pinkham}{article}{
      author={Pinkham, H.},
       title={Some local obstructions to deforming global surfaces},
        date={1981},
     journal={Nova Acta Leopoldina (N.F.)},
      volume={52},
      number={240},
       pages={173\ndash 178},
        note={Leopoldina Symposium: Singularities (Th\"{u}ringen, 1978)},
}

\bib{R87}{incollection}{
      author={Reid, M.},
       title={Young person's guide to canonical singularities},
        date={1987},
   booktitle={Algebraic geometry, {B}owdoin, 1985 ({B}runswick, {M}aine,
  1985)},
      series={Proc. Sympos. Pure Math.},
      volume={46},
   publisher={Amer. Math. Soc.},
       pages={345\ndash 414},
}

\bib{schlessinger}{article}{
      author={Schlessinger, M.},
       title={Rigidity of quotient singularities},
        date={1971},
     journal={Invent. Math.},
      volume={14},
       pages={17\ndash 26},
}
\end{biblist}
\end{bibdiv}

\end{document}